\newtheorem{thm}{Theorem}
\newtheorem{lemma}[thm]{Lemma}
\newtheorem{cor}[thm]{Corollary}
\theoremstyle{definition}
\let\epsilon=\varepsilon
\let\phi=\varphi
\def\ed{\,\mathrm{d}}
\newcommand{\R}{\mathbb R}
\newcommand{\N}{\mathbb N}
\newcommand{\Ab}{\mathbf{A}}
\newcommand{\Bb}{\mathbf{B}}
\title[Schr\"odinger Operators in exterior domains]{On the essential spectrum of magnetic Schr\"odinger operators in exterior domains}
\author{Ayman Kachmar}
\address[A. Kachmar]{Lebanese University, Department of Mathematics, Hadath,
Lebanon\newline Lebanese International University, School of Arts and
Sciences, Beirut, Lebanon}
\email[A. Kachmar]{ayman.kashmar@liu.edu.lb}
\author{Mikael Persson}
\address[M. Persson]{Centre for Mathematical Sciences, Box 118, SE-22100, Lund, Sweden.}
\email[M. Persson]{mickep@maths.lth.se}
\begin{document}

\begin{abstract}
We establish equality between the essential spectrum of the
Schr\"odinger operator with   magnetic field in the  exterior of a
compact arbitrary dimensional domain and that  of the operator
defined in all the space, and  discuss applications of this
equality.
\par \vspace*{10pt} \noindent {\sc R\'esum\'e.} {\textbf{\textit{Sur le spectre
de l'op\'erateur de Schr\"odinger avec un champ magn\'etique dans un
domaine exterieur.}}} On \'etablit une egalit\'e entre le spectre
essentiel de l'op\'erateur de Schr\"odinger avec un champ
magn\'etique dans un domaine exetrieur et celui de l'op\'erateur
dans tout l'espace. On discut des applications de cet egalit\'e.
\end{abstract}

\maketitle

\section{Introduction}

Magnetic Schr\"odinger operators in domains with boundaries appear
in several areas of physics, one can mention the Ginzburg-Landau
theory of superconductors, the theory of Bose-Einstein condensates,
Fermi-gases and the study of edge states in Quantum mechanics. We
refer the reader to \cite{AfHe, CFFH, FH} for details and additional
references on the subject. From the point of view of spectral
theory, the presence of boundaries has an effect similar to that of
perturbing the magnetic Schr\"odinger operator by an electric
potential. If we focus at present on two dimensional domains and
constant magnetic fields, we observe in both cases (exterior domain
and electric potential), that the essential spectrum consists of the
Landau levels and the discrete spectrum form clusters of eigenvalues
around the Landau levels. Several papers are devoted to the study of
different aspects of these clusters of eigenvalues in domains with
or without boundaries. In the semi-classical context, we can  can
cite \cite{CFFH, FK, Fr, HM, Krmp, Kjmp}, while \cite{P, puro}
contain results about accumulation  of eigenvalues in a non
semi-classical limit.

Consider a {\bf compact} and {\bf simply connected} domain
$K\subset\R^d$ with a {\bf smooth} $C^\infty$ boundary. Denote by
$\Omega=\R^d\setminus K$. Given a function $\gamma\in
L^\infty(\partial\Omega)$ and a vector potential $A\in
C^1(\R^d;\R^d)$, we define the Schr\"odinger operator
$L_{\Omega,\Bb}^\gamma$ with domain $D(L_{\Omega,\Bb}^\gamma)$ as
follows,
\begin{equation}\label{D-L}
D(L_{\Omega,\Bb}^\gamma)=\bigl\{u\in
L^2(\Omega)~:~(\nabla-i\Ab)^ju\in L^2(\Omega),~j=1,2;~
\nu\cdot(\nabla-i\Ab)u+\gamma u=0~{\rm
on~}\partial\Omega\bigr\}\,,
\end{equation}
\begin{equation}\label{Op-L}
\forall~u\in
D(L_{\Omega,\Bb}^\gamma)\,,\quad
L_{\Omega,\Bb}^\gamma u=-(\nabla-i\Ab)^2u\,.
\end{equation}
The vector $\nu$ is the unit {\it outward} normal vector of the
boundary $\partial\Omega$. The magnetic field $\Bb$ is identified by
an antisymmetric matrix $(b_{k,j})_{1\leq k,j\leq d}$ whose entries
are defined by the components $(a_j)$ of $\Ab$ as follows,
$b_{k,j}=\partial_{x_j}a_k-\partial_{x_k}a_j$. Associated to the
operator $L_{\Omega,\Bb}^\gamma$ is the quadratic form,
\begin{equation}\label{QF-q}
q_{\Omega,\Bb}^\gamma(u)=\int_\Omega|(\nabla-i\Ab)u|^2\ed
x+\int_{\partial\Omega}\gamma|u|^2\ed S\,,\quad u\in H^1_\Ab(\Omega)\,,
\end{equation}
where the space $H^1_{\Ab}(\Omega)= \{u\in
L^2(\Omega)~:~(\nabla-i\Ab)u\in L^2(\Omega)\}$ is the form domain of
$q_{\Omega,\Bb}^\gamma$.

Since the function $\gamma\in L^\infty(\partial\Omega)$, the
operator $L_{\Omega,\Bb}^\gamma$ is semi-bounded from below and its
associated quadratic form is closed. Freidrich's theorem   tells us
that $L_{\Omega,\Bb}^\gamma$ is self-adjoint  in $L^2(\Omega)$.

We introduce the magnetic Schr\"odinger operator $L_\Bb$ in
$L^2(\R^d)$ with magnetic field $\Bb$ as follows. The domain of the
operator is $ D(L_\Bb)=\{u\in L^2(\R^d)~:~(\nabla-i\Ab)^ju\in
L^2(\R^d),~j=1,2\}$,  and the action of the operator on its domain
is as follows,
\begin{equation}\label{op-L}
L_\Bb u=-(\nabla-i\Ab)^2u\,,\quad ({\rm
in}~L^2(\R^d))\,.\end{equation}

In this note, we  establish the following result and discuss consequences of it.
\begin{thm}\label{thm-KP}
The essential spectrum of the operator $L_{\Omega,\Bb}^\gamma$ is
the same as that of $L_\Bb$.
\end{thm}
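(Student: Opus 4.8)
The plan is to prove the two inclusions $\sigma_{\mathrm{ess}}(L_\Bb)\subset\sigma_{\mathrm{ess}}(L_{\Omega,\Bb}^\gamma)$ and $\sigma_{\mathrm{ess}}(L_{\Omega,\Bb}^\gamma)\subset\sigma_{\mathrm{ess}}(L_\Bb)$ separately, in each case via a singular-sequence (Weyl sequence) argument, exploiting the fact that $K$ is compact so that the two operators differ only on a bounded region. First I would fix a large ball $B_R\supset K$ and a smooth cutoff $\chi_R$ which vanishes on $B_{R}$ and equals $1$ outside $B_{2R}$, chosen so that $\|\nabla\chi_R\|_\infty\to 0$ as $R\to\infty$ (an IMS-type localization). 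The point of this cutoff is that if $u$ is supported outside $B_R$ then $u\in H^1_\Ab(\Omega)$ automatically satisfies the Robin boundary condition on $\partial\Omega$ trivially (its trace on $\partial\Omega$ vanishes), so that multiplication by $\chi_R$ moves functions between the two form domains at the cost of commutator terms controlled by $\|\nabla\chi_R\|_\infty$.

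For the inclusion $\sigma_{\mathrm{ess}}(L_\Bb)\subset\sigma_{\mathrm{ess}}(L_{\Omega,\Bb}^\gamma)$: let $\lambda\in\sigma_{\mathrm{ess}}(L_\Bb)$ and pick a Weyl sequence $(u_n)\subset D(L_\Bb)$, i.e.\ $\|u_n\|=1$, $u_n\rightharpoonup 0$, $\|(L_\Bb-\lambda)u_n\|\to 0$. Since $u_n\rightharpoonup 0$ and the embedding $H^1_\Ab(B_{2R})\hookrightarrow L^2(B_{2R})$ is compact, $\|u_n\|_{L^2(B_{2R})}\to 0$; combined with elliptic regularity / the resolvent bound this also forces the $H^1_\Ab$-mass of $u_n$ on $B_{2R}$ to go to zero along a subsequence (or one first shifts to a spectral window). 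Then $v_n:=\chi_R u_n\in D(q_{\Omega,\Bb}^\gamma)$ after extending by zero across $K$, $\|v_n\|\to 1$, $v_n\rightharpoonup 0$ in $L^2(\Omega)$, and a direct computation gives $\|(L_{\Omega,\Bb}^\gamma-\lambda)v_n\|\le \|\chi_R(L_\Bb-\lambda)u_n\| + C(\|\nabla\chi_R\|_\infty\|u_n\|_{H^1_\Ab} + \|\nabla\chi_R\|_\infty^2\|u_n\|) $, which one makes small by first choosing $R$ large and then $n$ large (or by working with the quadratic form and a Persson-type characterization of $\inf\sigma_{\mathrm{ess}}$ applied in spectral windows). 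Hence $\lambda\in\sigma_{\mathrm{ess}}(L_{\Omega,\Bb}^\gamma)$.

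The reverse inclusion is symmetric: given $\lambda\in\sigma_{\mathrm{ess}}(L_{\Omega,\Bb}^\gamma)$ and a singular sequence $(u_n)$ for it, one again localizes with $\chi_R$; since $u_n\rightharpoonup 0$ in $L^2(\Omega)$ and the relevant trace and embedding maps on the bounded collar $B_{2R}\setminus K$ are compact, the boundary term $\int_{\partial\Omega}\gamma|\chi_R u_n|^2\,dS=0$ (as $\chi_R$ kills the trace) and the interior contribution near $K$ is negligible, so $w_n:=\chi_R u_n$, extended by zero into $K$, is a singular sequence for $L_\Bb$ at $\lambda$. The main obstacle, and the step deserving the most care, is the control of the $H^1_\Ab$-norm of the singular sequence on the bounded region $B_{2R}$: weak $L^2$-convergence to zero only gives $L^2$-smallness there, so to bound the commutator terms $\nabla\chi_R\cdot(\nabla-i\Ab)u_n$ one needs a uniform $H^1_\Ab$ bound on $(u_n)$ (which follows from $\|u_n\|=1$, $\|(L-\lambda)u_n\|$ bounded, and the form being comparable to the graph norm, using $\gamma\in L^\infty$ and a trace inequality) together with a local elliptic estimate; alternatively one circumvents this entirely by using the form version and Persson's lemma, estimating $q^\gamma_{\Omega,\Bb}(\chi_R u)-\lambda\|\chi_R u\|^2$ directly and absorbing the $\|\nabla\chi_R\|_\infty^2$ term, which I expect to be the cleanest route and the one I would write up.
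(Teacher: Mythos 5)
Your argument is correct in outline, but it takes a genuinely different route from the paper. The paper never constructs singular sequences: it introduces the decoupled operator $\widetilde L=L_{\Omega,\Bb}^\gamma\oplus L_{K,\Bb}^{-\gamma}$ on $L^2(\R^d)=L^2(\Omega)\oplus L^2(K)$ (the sign flip $-\gamma$ on $K$ is chosen so that the Robin conditions on the two sides read $\partial_N u+\gamma u=0$ with the same co-normal trace operator from either side), notes that $L_{K,\Bb}^{-\gamma}$ has compact resolvent so $\sigma_{\mathrm{ess}}(\widetilde L)=\sigma_{\mathrm{ess}}(L_{\Omega,\Bb}^\gamma)$, and then proves that the resolvent difference $V=\widetilde L^{-1}-L_\Bb^{-1}$ is compact by writing $\langle f,Vg\rangle$ via Green's formula as a single integral over $\Gamma=\partial\Omega$ and combining $H^2$ elliptic estimates up to the boundary with compactness of the trace embeddings; Weyl's theorem for resolvents then finishes. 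Your IMS/Weyl-sequence argument replaces all of this by direct transplantation of singular sequences across the bounded region containing $K$, and the key technical point you flag --- the uniform $H^1_\Ab$ bound on the singular sequence, obtained from $\|u_n\|=1$, boundedness of $\|(L-\lambda)u_n\|$, and control of the boundary term $\int_{\partial\Omega}\gamma|u_n|^2$ by a trace inequality plus the diamagnetic inequality --- is exactly the right one; with it, local compactness gives $L^2$-decay on the annulus and a local form identity gives decay of $(\nabla-i\Ab)u_n$ there, so the commutator terms vanish. What each approach buys: yours is more elementary (only interior elliptic estimates and form-level arguments; no auxiliary operator on $K$, no boundary $H^2$ regularity for the Robin problem), while the paper's compact-resolvent-difference lemma is a stronger, reusable statement that also delivers Corollary~\ref{thm-HM-*} immediately and handles both inclusions in one stroke. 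Two small cautions: your fallback of ``Persson's lemma applied in spectral windows'' is not a valid characterization of the full essential spectrum (Persson's theorem only identifies $\inf\sigma_{\mathrm{ess}}$), so you should commit to the singular-sequence route; and in the first inclusion no ``extension by zero across $K$'' is needed --- $\chi_R u_n$ already vanishes on a neighborhood of $K$, one simply restricts it to $\Omega$, where it lies in $D(L_{\Omega,\Bb}^\gamma)$ because the Robin condition holds trivially.
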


Earlier versions of Theorem~\ref{thm-KP} are already proven for
two-dimensional domains \cite{P, puro} under different boundary
conditions and for constant magnetic fields only.
Theorem~\ref{thm-KP} remains true for the magnetic Schr\"odinger
operator with Dirichlet boundary condition (that is when replacing
the Robin condition in \eqref{D-L} by the condition $u=0$ on
$\partial\Omega$). The proof is exactly the same as the one we
present here.

\section{Proof of Theorem~\ref{thm-KP}}\label{Sec-proof}

We denote by $\Gamma$ the common boundary of $\Omega$ and $K$ and
define the following operator on $\Gamma$,
\begin{equation}\label{eq-normalder}
\partial_\Gamma u=\partial_Nu+\gamma\,u=\nu\cdot(\nabla-ib\Ab)u+\gamma\,u\,,
\end{equation}
where $\nu$ is the unit {\it outward} normal vector to the boundary
of $\Omega$.

We have introduced the operator $L_{\Omega,\Bb}^\gamma$ with
quadratic form $q_{\Omega,\Bb}^\gamma$ from \eqref{QF-q}. We will
use also the corresponding operator in $K$, namely
$L_{K,\Bb}^{-\gamma}$. Since the quadratic forms
$q_{\Omega,\Bb}^\gamma$ and $q_{\Omega,\Bb}^{-\gamma}$ are
semi-bounded (see \eqref{QF-q}), we get up to a shift by a positive
constat that they are strictly positive. Thus we assume, the
hypothesis:
\begin{itemize}
\item[(H1)] The operators $L_\Bb$, $L_{\Omega,\Bb}^\gamma$ and
$L_{K,\Bb}^{-\gamma}$  are invertible.
\end{itemize}

Since $\Omega$ and $K$ are complementary, the Hilbert space
$L^2(\R^d)$ is decomposed as the direct sum $L^2(\Omega)\oplus
L^2(K)$ in the sense that any function $u\in L^2(\R^d)$ can be
represented as $u_\Omega\oplus u_K$ where $u_\Omega$ and $u_K$ are
the restrictions of $u$ to $\Omega$ and $K$ respectively. Notice
that, for all $u=u_\Omega\oplus u_K\in L^2(\R^d)$ such that
$u_\Omega\in D(L_{\Omega,\Bb})$ and $u_K\in D(L_{K,\Bb})$, then
$\partial_\Gamma u_\Omega=\partial_\Gamma u_K=0$, where
$\partial_\Gamma$ is the trace operator from \eqref{eq-normalder}.

We can  extend the operator
$L_{\Omega,\Bb}^\gamma$ in $L^2(\Omega)$ to an operator $\widetilde L$ in
$L^2(\R^d)$. Actually, let $\widetilde L=L_{\Omega,\Bb}^\gamma\oplus
L_{K,\Bb}^{-\gamma}$ in $D(L_{\Omega,\Bb}^\gamma)\oplus D(L_{K,\Bb}^{-\gamma})\subset
L^2(\R^d)$. More precisely, $\widetilde L$ is the self-adjoint
extension associated with the quadratic form
\begin{equation}\label{eq-QF-q}
\widetilde q(u)=q_{\Omega,\Bb}^{\gamma}(u_\Omega)+q_{K,\Bb}^{-\gamma}(u_K)\,,\quad
u=u_\Omega\oplus u_K\in L^2(\R^d)\,.
\end{equation}
By the hypothesis (H1), we may speak of the resolvent $\widetilde
R=\widetilde L^{-1}$ of $\widetilde L$.  Since $\sigma(\widetilde
L)=\sigma(L_{\Omega,\Bb}^{\gamma})\cup\sigma(L_{K,\Bb}^{-\gamma})$
and $L_{K,\Bb}^{-\gamma}$  has a compact resolvent, then we get the
following lemma.
\begin{lemma}\label{lem:esssp}
With $\widetilde L$, $\widetilde R$ and $L_{\Omega,\Bb}^\gamma$ defined
as above, it holds true that:
\begin{enumerate}
\item $\sigma_{\text{ess}}(L_{\Omega,\Bb}^\gamma)=\sigma_{\text{ess}}(\widetilde L)$.
\item $\lambda\in\sigma_{\text{ess}}(\widetilde R)\setminus\{0\}$ if
and only if $\lambda\not=0$ and
$\lambda^{-1}\in\sigma_{{\rm ess}}(L_{\Omega,\Bb}^\gamma)$.
\end{enumerate}
\end{lemma}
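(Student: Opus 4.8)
The plan is to establish the two claims of Lemma~\ref{lem:esssp} by exploiting the direct-sum structure of $\widetilde L$ together with the compactness of the resolvent of $L_{K,\Bb}^{-\gamma}$. Recall that $\widetilde L = L_{\Omega,\Bb}^\gamma \oplus L_{K,\Bb}^{-\gamma}$ acts diagonally on $L^2(\R^d) = L^2(\Omega)\oplus L^2(K)$, so its spectrum is the union $\sigma(\widetilde L)=\sigma(L_{\Omega,\Bb}^\gamma)\cup\sigma(L_{K,\Bb}^{-\gamma})$, and the same holds for the essential spectrum: $\sigma_{\text{ess}}(\widetilde L)=\sigma_{\text{ess}}(L_{\Omega,\Bb}^\gamma)\cup\sigma_{\text{ess}}(L_{K,\Bb}^{-\gamma})$. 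Since $K$ is compact with smooth boundary and $\gamma\in L^\infty(\partial\Omega)$, the form domain of $q_{K,\Bb}^{-\gamma}$ is $H^1_\Ab(K)$, which embeds compactly into $L^2(K)$; hence $L_{K,\Bb}^{-\gamma}$ has compact resolvent and therefore $\sigma_{\text{ess}}(L_{K,\Bb}^{-\gamma})=\emptyset$. Combining these observations gives claim~(1) immediately: $\sigma_{\text{ess}}(L_{\Omega,\Bb}^\gamma)=\sigma_{\text{ess}}(\widetilde L)$.

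For claim~(2), I would use the spectral mapping theorem for resolvents of self-adjoint operators. Under hypothesis (H1), $0\notin\sigma(\widetilde L)$, so $\widetilde R=\widetilde L^{-1}$ is a bounded self-adjoint operator and the map $\lambda\mapsto\lambda^{-1}$ is a homeomorphism between $\sigma(\widetilde L)$ and $\sigma(\widetilde R)\setminus\{0\}$. The key point is that this correspondence respects the essential/discrete decomposition: a nonzero $\lambda$ lies in $\sigma_{\text{ess}}(\widetilde R)$ if and only if $\lambda^{-1}\in\sigma_{\text{ess}}(\widetilde L)$. This can be seen via Weyl sequences — if $(u_n)$ is a singular Weyl sequence for $\widetilde L$ at $\mu=\lambda^{-1}$, then $(u_n)$ (after normalization) is a singular Weyl sequence for $\widetilde R$ at $\lambda$, using $\widetilde R u_n - \lambda u_n = -\lambda\widetilde R(\widetilde L u_n - \mu u_n)$ and the boundedness of $\widetilde R$; the converse is analogous. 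Feeding in claim~(1), which identifies $\sigma_{\text{ess}}(\widetilde L)$ with $\sigma_{\text{ess}}(L_{\Omega,\Bb}^\gamma)$, yields exactly the stated equivalence in~(2).

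The only genuine obstacle is verifying the compactness of the resolvent of $L_{K,\Bb}^{-\gamma}$, i.e.\ the compact embedding $H^1_\Ab(K)\hookrightarrow L^2(K)$ on the bounded smooth domain $K$. This follows from the diamagnetic inequality, which controls $|\nabla|u||$ by $|(\nabla-i\Ab)u|$ pointwise, reducing the magnetic Sobolev embedding to the standard Rellich--Kondrachov theorem on $K$; one must also note that the boundary term $\int_{\partial\Omega}\gamma|u|^2\,\mathrm{d}S$ is form-bounded relative to $q_{K,\Bb}$ with relative bound zero (by a standard trace inequality), so it does not affect the form domain. Everything else is a routine application of the spectral mapping property for essential spectra of self-adjoint operators.
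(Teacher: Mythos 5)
Your proposal is correct and follows essentially the same route as the paper, which derives the lemma in one line from the identity $\sigma(\widetilde L)=\sigma(L_{\Omega,\Bb}^\gamma)\cup\sigma(L_{K,\Bb}^{-\gamma})$ and the compactness of the resolvent of $L_{K,\Bb}^{-\gamma}$; you merely supply the standard details (Rellich-type compact embedding on the bounded domain $K$ and the spectral mapping $\lambda\mapsto\lambda^{-1}$ for essential spectra of self-adjoint operators) that the paper leaves implicit.
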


In the next lemma, we observe that the operator
$L_{\Omega,\Bb}^{\gamma}$ can be viewed as a compact perturbation of
the magnetic Schr\"odinger operator $L_{\Bb}$ in $L^2(\R^d)$
introduced in \eqref{op-L}.

\begin{lemma}\label{lem:V}
The operator $V=\widetilde L^{-1}-L^{-1}_{\Bb}$ is compact.
Moreover, for all $f,g\in L^2(\R^d)$,
\begin{equation}\label{eq-V}
\langle f,Vg\rangle_{L^2(\R^d)}=\int_{\Gamma}\partial_\Gamma
u\cdot\overline{(v_\Omega-v_K)}\ed S\,,\end{equation} where
$u=L_\Bb^{-1}f$ and $v=\widetilde L^{-1} g$.
\end{lemma}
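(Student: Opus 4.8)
The plan is to exploit the resolvent identity relating $\widetilde L$ and $L_\Bb$. Fix $g \in L^2(\R^d)$ and set $v = \widetilde L^{-1} g$, so that $v_\Omega \in D(L_{\Omega,\Bb}^\gamma)$ and $v_K \in D(L_{K,\Bb}^{-\gamma})$; in particular $\partial_\Gamma v_\Omega = 0$ and $\partial_\Gamma v_K = 0$, but $v$ itself need not lie in $D(L_\Bb)$ because the traces $v_\Omega$ and $v_K$ may disagree on $\Gamma$ (their normal derivatives jump). For $f \in L^2(\R^d)$ put $u = L_\Bb^{-1} f$, which does belong to $D(L_\Bb) \subset H^2$. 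Then
\begin{equation}\label{eq-Vproof}
\langle f, Vg\rangle = \langle f, \widetilde L^{-1} g\rangle - \langle L_\Bb^{-1} f, g\rangle = \langle L_\Bb u, v\rangle - \langle u, g\rangle\,.
\end{equation}
The first term on the right of \eqref{eq-Vproof} is $\langle -(\nabla - i\Ab)^2 u, v\rangle$; I split the $L^2(\R^d)$ integral as an integral over $\Omega$ plus one over $K$, and in each piece integrate by parts twice (Green's formula for the magnetic Laplacian) to move both derivatives onto the smooth function $u$. On $\Omega$ this produces $q_{\Omega,\Bb}^\gamma(u,v_\Omega)$ together with a boundary term on $\Gamma$ involving $\partial_N u$ and the trace of $\overline{v_\Omega}$; on $K$ it produces $q_{K,\Bb}^{-\gamma}(u,v_K)$ together with a boundary term on $\Gamma$ involving $\partial_N u$ (with the opposite orientation of $\nu$, since $\nu$ is outward for $\Omega$ hence inward for $K$) and $\overline{v_K}$. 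One more integration by parts, using $\partial_\Gamma v_\Omega = \partial_\Gamma v_K = 0$ to absorb the $\gamma$ terms, rewrites $q_{\Omega,\Bb}^\gamma(u,v_\Omega) = \langle u, L_{\Omega,\Bb}^\gamma v_\Omega\rangle = \langle u, g_\Omega\rangle$ and similarly $q_{K,\Bb}^{-\gamma}(u,v_K) = \langle u, g_K\rangle$, so the interior contributions combine to $\langle u, g\rangle$ and cancel the second term of \eqref{eq-Vproof}. What survives is exactly the sum of the two $\Gamma$-boundary terms, which by the orientation bookkeeping equals $\int_\Gamma \partial_\Gamma u \cdot \overline{(v_\Omega - v_K)} \ed S$ (the $\gamma u$ contribution to $\partial_\Gamma u$ enters with a consistent sign because the relevant $\gamma$ appears as $+\gamma$ on the $\Omega$ side and $-\gamma$ on the $K$ side). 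This proves \eqref{eq-V}.

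For compactness of $V$, I read off \eqref{eq-V}: the map $f \mapsto \partial_\Gamma u = \partial_\Gamma (L_\Bb^{-1} f)$ factors through $L^2(\R^d) \to H^2_{\mathrm{loc}} \to H^{1/2}(\Gamma)$ by elliptic regularity and the trace theorem, hence is bounded from $L^2(\R^d)$ into $L^2(\Gamma)$, while the map $g \mapsto v_\Omega|_\Gamma - v_K|_\Gamma$ is bounded from $L^2(\R^d)$ into $H^{1/2}(\Gamma)$ by the same reasoning applied to $\widetilde L^{-1}$. Since $\Gamma$ is a compact smooth manifold, the embedding $H^{1/2}(\Gamma) \hookrightarrow L^2(\Gamma)$ is compact; composing with this embedding shows that $V$ is a composition of a bounded operator with a compact one, hence compact. (Equivalently: $V$ is given by a sesquilinear form that is bounded on $L^2 \times H^{1/2}$-type spaces and the $H^{1/2} \hookrightarrow L^2$ gain on $\Gamma$ supplies the compactness.)

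The main obstacle is the careful sign and orientation accounting in the double integration by parts, together with the technical point that $v_\Omega$ and $v_K$ individually have only $H^{3/2}$-type boundary regularity a priori, so the boundary integrals must be interpreted via the $H^{1/2}$–$H^{-1/2}$ duality pairing on $\Gamma$; one should first establish \eqref{eq-V} for $f, g$ in a dense set (say $f$ smooth compactly supported, and $g$ such that $v$ is more regular near $\Gamma$) and then pass to the limit using the boundedness estimates just described. The cancellation of the interior terms is the conceptual heart — it is forced by the defining boundary conditions of $\widetilde L$ — and once that is in hand the compactness is essentially immediate from the trace gain on the compact hypersurface $\Gamma$.
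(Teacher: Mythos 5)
Your proof is correct and follows essentially the same route as the paper: the identity \eqref{eq-V} comes from the same resolvent manipulation $\langle f,Vg\rangle=\langle L_\Bb u,v\rangle-\langle u,g\rangle$ followed by Green's formula and the boundary conditions $\partial_\Gamma v_\Omega=\partial_\Gamma v_K=0$, and compactness is extracted from the trace gain on the compact hypersurface $\Gamma$. The only cosmetic difference is that you package the compactness as a factorization of $V$ through the compact embedding $H^{1/2}(\Gamma)\hookrightarrow L^2(\Gamma)$, whereas the paper runs a weakly-convergent-sequence argument using elliptic $H^2$ estimates near $\Gamma$ and the compactness of the trace map from $H^2$ of a neighbourhood into $L^2(\Gamma)$ --- the same mechanism with different bookkeeping.
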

\begin{proof}
Since $f=L_\Bb u$
and $g=\widetilde L v= L_{\Omega,\Bb}^{\gamma} v_\Omega\oplus L_{K,\Bb}^{-\gamma}
v_K$, it follows that,
\begin{displaymath}
\langle f,Vg\rangle_{L^2(\R^d)}=\int_\Omega L_\Bb u\cdot \overline{v_\Omega}\ed
x+\int_KL_\Bb u\cdot\overline{v_K}\ed x-\int_\Omega
u\cdot\overline{L_{\Omega,\Bb}^{\gamma} v_\Omega}\ed x-\int_K
u\cdot\overline{L_{K,\Bb}^{-\gamma}v_K}\ed x\,.
\end{displaymath}
The identity in \eqref{eq-V} then
follows by integration by parts and by using the boundary conditions
$\partial_\Gamma v_\Omega=\partial_\Gamma v_K=0$.

As we will show below, compactness of the trace operators together with
\eqref{eq-V} give us compactness of the operator $V$.
Let $(g_n)$ be a sequence in $L^2(\R^d)$ that converges weakly to $0$.
We will prove that $(Vg_n)$ converges strongly in $L^2(\R^d)$. We define,
$$u^{(n)}=L_\Bb^{-1}Vg_n\,,\quad v^{(n)}=\widetilde L^{-1}g_n\,.$$
Let $U\subset\R^d$ be an open and bounded set that contains the
common boundary $\Gamma$ of $\Omega$ and $K$. We claim that there
exists a positive constant $C$ such that,
\begin{equation}\label{eq-bnd}
\forall~n\in\mathbb N\,,\quad
\|u^{(n)}\|_{H^2(\Omega)}+\|v^{(n)}_\Omega\|_{H^2(\Omega\cap U)}+\|v^{(n)}_K\|_{H^2(K\cap U)}\leq C\,.
\end{equation}
Once the estimate in \eqref{eq-bnd} is established, we get compactness of the operator $V$ as follows. Since the embeddings of $H^2(U\cap\Omega)$ and $H^2(\Omega\cap K)$ in $L^2(\Gamma)$ are compact, we get that,
$$\|v^{(n)}_\Omega\|_{L^2(\Gamma)}+ \|v^{(n)}_K\|_{L^2(K)}\to0\quad {\rm as}~n\to\infty\,.$$
Also, the trace theorem yields that $\|\partial_\Gamma
u^{(n)}\|_{L^2(\Gamma)}$ is bounded. Now, we may use \eqref{eq-V} with $f=Vg_n$, $g=g_n$ and deduce that,
$$\|Vg_n\|_{L^2(\R^d)}^2 =\int_{\Gamma}\partial_\Gamma
u^{(n)}\cdot\overline{(v^{(n)}_\Omega-v^{(n)}_K)}\ed S\to0\quad{\rm
as}~n\to\infty\,,$$ thereby establishing compactness of $V$. To
finish the proof of  Lemma~\ref{lem:V}, we need to prove the claim
in \eqref{eq-bnd}. Since $Vg_n$ is in $L^2(\R^d)$ we get by
definition of $L_\Bb^{-1}$ that $L_\Bb u^{(n)}=Vg_n$. As a
consequence, elliptic $L^2$-estimates yield boundedness of $u^{(n)}$
in $H^2(U)$. In a similar way we obtain boundedness of
$v^{(n)}_\Omega$ and $v^{(n)}_K$ in $H^2$. Actually, it holds true
that,
$$L_{\Bb,\Omega}v^{(n)}_\Omega=g_n \quad {\rm in}~\Omega\,,\quad L_{\Bb,K}v^{(n)}_K=g_n \quad {\rm in}~L^2(K)\,,$$
together with the boundary conditions $\partial_\Gamma
v^{(n)}_\Omega=0$ and $\partial_\Gamma v^{(n)}_K=0$. Boundedness of
$v^{(n)}_\Omega$ and $v^{(n)}_K$ in $H^2$ then result from elliptic
$L^2$-estimates (up to the boundary).
\end{proof}

\begin{proof}[Proof of Theorem~\ref{thm-KP}]
As corollary of Lemma~\ref{lem:V} and Weyl's theorem, we get that
$L_\Bb$ and $\widetilde L$ have the same essential spectrum. Consequently, Lemma~\ref{lem:esssp} tells us that Theorem~\ref{thm-KP} is true.
\end{proof}

\section{Applications of Theorem~\ref{thm-KP}}\label{sec-applications}

The spectrum of the operator $L_\Bb$ is studied in several papers,
see \cite{HM-88, Sh} and the references therein. Under the
assumptions made in Corollary~\ref{thm-HM} below, it is proved in
\cite[Thm.~1.5]{HM-88} that the essential spectrum of $L_\Bb$ is
exactly the union of spectra of all operators of the form
$L_{\Bb_\infty}$, where $\Bb_\infty$ is a cluster value of the
magnetic field $\Bb$ at $\infty$. The spectrum of $L_{\Bb_\infty}$
is either the interval $[\,|\Bb_\infty|,\infty)$ (if $\Bb_\infty=0$
or $d$ is odd) or the landau levels otherwise. Since
$L_{\Omega,\Bb}^\gamma$ has the same essential spectrum as $L_\Bb$,
we get the result in Corollary~\ref{thm-HM} below.

\begin{cor}\label{thm-HM}
Suppose $d=2,3$ and the magnetic field $\Bb\in C^{3}$ satisfies the
following condition,
\begin{equation}\label{eq-HM-con}
\sum_{1\leq |\alpha|\leq 3}\sum_{1\leq i,j\leq n}|D^\alpha b_{i,j}(x)|=\mathcal O\Big(|x|^{-\alpha}\Big)\,,\quad {\rm as~}|x|\to\infty\,,
\end{equation}
where $b_{i,j}$ are the components of $\Bb$ and $\alpha$ is a
positive real number. It holds true that:
\begin{enumerate}
\item If  $\displaystyle\liminf_{|x|\to\infty}|B(x)|=0$, then
$\displaystyle\sigma_{\rm ess} (L_{\Omega,\Bb}^\gamma)=
[0,\infty)$.
\item\label{st-sp}
If $d=2$, $\displaystyle\lim_{|x|\to\infty}|\Bb(x)|=b$ and
$b>0$,  then, $ \displaystyle\sigma_{\rm ess}
(L_{\Omega,\Bb}^\gamma)= \{(2n-1)b~:~n\in\N\} $.
\item If  $d=3$ and $\displaystyle\liminf_{|x|\to\infty}|\Bb(x)|=b$, then,
$\displaystyle\sigma_{\rm
ess}(L_{\Omega,\Bb}^\gamma)=[b,\infty)$.
\end{enumerate}
\end{cor}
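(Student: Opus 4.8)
The plan is to reduce the computation to the operator $L_\Bb$ on the whole space via Theorem~\ref{thm-KP}, and then to invoke the description of $\sigma_{\rm ess}(L_\Bb)$ in \cite[Thm.~1.5]{HM-88}. By Theorem~\ref{thm-KP} one has $\sigma_{\rm ess}(L_{\Omega,\Bb}^\gamma)=\sigma_{\rm ess}(L_\Bb)$, so it suffices to identify the right-hand side under the decay hypothesis \eqref{eq-HM-con}. According to \cite[Thm.~1.5]{HM-88}, under \eqref{eq-HM-con} the set $\sigma_{\rm ess}(L_\Bb)$ equals the union $\bigcup_{\Bb_\infty}\sigma(L_{\Bb_\infty})$, taken over all cluster values $\Bb_\infty$ of $\Bb$ at infinity, i.e.\ over all locally uniform limits of translated fields $\Bb(x_k+\cdot)$ along sequences with $|x_k|\to\infty$.

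First I would identify these cluster values. Because \eqref{eq-HM-con} forces the first derivatives of the entries $b_{i,j}$ to vanish at infinity, for any sequence $x_k\to\infty$ the gradients of $\Bb(x_k+\cdot)$ converge to $0$ locally uniformly; hence every cluster value $\Bb_\infty$ is a \emph{constant} antisymmetric matrix, and $\Bb_\infty=\lim_k\Bb(x_k)$ whenever that limit exists. Conversely, any accumulation point of $x\mapsto\Bb(x)$ as $|x|\to\infty$ arises in this way and is thus a cluster value. In particular the set of moduli $\{\,|\Bb_\infty|\,\}$ of cluster values is a closed subset of $[0,\infty)$ whose infimum equals $\liminf_{|x|\to\infty}|\Bb(x)|$ and which attains that infimum. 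When $d=2$, the connectedness of $\{|x|>R\}$ together with the continuity of $x\mapsto b_{1,2}(x)$ shows moreover that if $|\Bb(x)|\to b>0$ then $b_{1,2}(x)$ converges to a single constant of modulus $b$, so the only cluster value is that constant field.

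It then remains to recall the spectrum of $L_C$ for a constant field $C$, as already quoted in the text: $\sigma(L_C)=[\,|C|,\infty)$ if $C=0$ or $d$ is odd, and $\sigma(L_C)=\{(2n-1)|C|:n\in\N\}$ if $d=2$ and $C\neq 0$; then I would assemble the three cases. In case (1), $\liminf_{|x|\to\infty}|\Bb(x)|=0$ makes $0$ a cluster value, so $[0,\infty)=\sigma(L_0)\subseteq\sigma_{\rm ess}(L_\Bb)$, while the reverse inclusion is immediate from $L_\Bb\geq0$. In case (2), $d=2$ and $|\Bb(x)|\to b>0$ force the only cluster value to be a constant field of modulus $b$, whence $\sigma_{\rm ess}(L_\Bb)=\{(2n-1)b:n\in\N\}$. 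In case (3), $d=3$ and $\liminf_{|x|\to\infty}|\Bb(x)|=b$ yield a cluster value of modulus exactly $b$, giving $[b,\infty)\subseteq\sigma_{\rm ess}(L_\Bb)$, while every cluster value $C$ has $|C|\geq b$ and hence $\sigma(L_C)=[\,|C|,\infty)\subseteq[b,\infty)$, so the union is exactly $[b,\infty)$.

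The genuinely substantive step---the one where \eqref{eq-HM-con} is used beyond merely citing \cite{HM-88}---is the reduction carried out in the second paragraph: that the decay of the derivatives of $\Bb$ forces all cluster fields to be constant and pins their moduli to the limiting behaviour of $|\Bb(x)|$. Everything else is the bookkeeping of the third paragraph together with the standard computation of the Landau spectrum. I do not expect a serious obstacle here, the argument being essentially a packaging of Theorem~\ref{thm-KP} and \cite[Thm.~1.5]{HM-88}.
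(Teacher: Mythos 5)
Your proposal is correct and follows the same route as the paper: reduce to $L_\Bb$ via Theorem~\ref{thm-KP}, then invoke the Helffer--Mohamed characterisation of $\sigma_{\rm ess}(L_\Bb)$ as the union of spectra of the constant-field operators $L_{\Bb_\infty}$ over cluster values at infinity, together with the known Landau-level computation. The paper states this only as a brief citation in the paragraph preceding the corollary, whereas you additionally spell out (correctly) why the decay condition forces all cluster fields to be constant and how their moduli are pinned by $\liminf_{|x|\to\infty}|\Bb(x)|$; this is a welcome elaboration of the same argument rather than a different one.
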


The result of Corollary~\ref{thm-HM} remains true if the assumption
\eqref{eq-HM-con} is relaxed, by supposing that $\Bb(x)\to \mathbf
b$ as $|x|\to\infty$ and  $\Bb\in C^\infty$ (see
\cite[Theorems~2.2~\&~2.4]{Sh}).

The next corollary indicates situations where the spectrum of $L_{\Omega,\Bb}^\gamma$ is purely discrete.

\begin{cor}\label{thm-HM-*}
Suppose that there exists a non-negative integer $r$ such that $\Bb\in C^{r+1}(\R^d)$. Let $b_{k,j}$ be the components of $\Bb$.
If there exists a positive constant  $C$ such that,
$$\sum_{k,j}\sum_{\substack{\alpha\in\N^d\\|\alpha|= r+1}}|D^\alpha b_{k,j}(x)|\leq C\left(\sum_{k,j}\sum_{\substack{\alpha\in\N^d\\|\alpha|\leq r}}|D^\alpha b_{k,j}(x)|+1\right)\,,$$
and $\displaystyle\sum_{k,j}\sum_{\substack{\alpha\in\N^d\\|\alpha|\leq r}}|D^\alpha b_{k,j}(x)|\to\infty$ as $|x|\to\infty$, then  the
operator $L_{\Omega,\Bb}^\gamma$ has compact resolvent.
\end{cor}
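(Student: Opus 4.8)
The plan is to deduce the corollary from Theorem~\ref{thm-KP} together with the magnetic-bottle criterion of Helffer and Mohamed \cite{HM-88}. Compactness of the resolvent is unchanged if one adds a constant to the operator, so I would first note that, after replacing $L_\Bb$, $L_{\Omega,\Bb}^\gamma$ and $L_{K,\Bb}^{-\gamma}$ by suitable shifts, hypothesis (H1) holds and Theorem~\ref{thm-KP} applies, giving $\sigma_{\rm ess}(L_{\Omega,\Bb}^\gamma)=\sigma_{\rm ess}(L_\Bb)$. Hence $L_{\Omega,\Bb}^\gamma$ has compact resolvent precisely when $\sigma_{\rm ess}(L_\Bb)=\emptyset$, i.e.\ precisely when $L_\Bb$ has compact resolvent, and it suffices to prove the latter.

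To that end set
\[
m(x)=\sum_{k,j}\sum_{\substack{\alpha\in\N^d\\|\alpha|\leq r}}|D^\alpha b_{k,j}(x)|\,.
\]
The first displayed hypothesis of the corollary is exactly the Helffer--Mohamed subordination condition: the derivatives of $\Bb$ of order $r+1$ are bounded by $C\,(m(x)+1)$. Under this condition it is proved in \cite{HM-88} that $L_\Bb$ has compact resolvent if and only if $m(x)\to\infty$ as $|x|\to\infty$; the mechanism is an a priori estimate of the form
\[
\int_{\R^d}|(\nabla-i\Ab)u|^2\,\ed x\ \geq\ \frac{1}{C_0}\int_{\R^d}m(x)^{\epsilon}\,|u(x)|^2\,\ed x\ -\ C_0\,\|u\|_{L^2(\R^d)}^2
\]
valid for all $u$ in the form domain of $L_\Bb$, with some $\epsilon>0$ and $C_0>0$ depending only on $d$, $r$ and $C$. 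Since the second hypothesis of the corollary says $m(x)\to\infty$, this weighted coercivity estimate makes the form domain of $L_\Bb$ embed compactly into $L^2(\R^d)$ — split any form-bounded sequence into its restriction to a large ball, where one uses the compact Rellich embedding, and its exterior, where the weight $m^{\epsilon}$ is large — so $L_\Bb$ has compact resolvent and $\sigma_{\rm ess}(L_\Bb)=\emptyset$.

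Combining the two steps, $\sigma_{\rm ess}(L_{\Omega,\Bb}^\gamma)=\emptyset$. As $L_{\Omega,\Bb}^\gamma$ is self-adjoint and bounded below, empty essential spectrum forces its spectrum to be purely discrete with the only accumulation point at $+\infty$, hence its resolvent is compact, which is the assertion of the corollary. There is no genuinely new analytic content to produce: the substantive ingredient is the Helffer--Mohamed estimate above, which I would invoke rather than reprove, and the only points requiring attention are checking that the corollary's hypotheses really are the ones under which \cite{HM-88} applies (including the choice of a gauge for which $\Ab$ is controlled in terms of $m$) and observing that the preliminary shift leaves compactness of the resolvent intact.
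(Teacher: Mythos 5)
Your proposal is correct and uses the same essential ingredient as the paper: the Helffer--Mohamed criterion \cite[Corollaire~1.2]{HM-88} guaranteeing that $L_\Bb$ has compact resolvent under the stated subordination and growth hypotheses, followed by a transfer of that compactness to the exterior domain. The only (inessential) difference is in the transfer step: the paper argues directly from Lemma~\ref{lem:V} that $\widetilde L^{-1}=L_\Bb^{-1}+V$ is compact, hence so is the resolvent of its direct summand $L_{\Omega,\Bb}^\gamma$, whereas you route through Theorem~\ref{thm-KP} and the equivalence, for a semi-bounded self-adjoint operator, between empty essential spectrum and compact resolvent --- an equivalent but slightly longer path, since Theorem~\ref{thm-KP} is itself deduced from Lemma~\ref{lem:V}.
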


Under the conditions in Corollary~\ref{thm-HM-*}, the operator
$L_\Bb$ has compact resolvent \cite[Corollaire~1.2]{HM-88}. As a
consequence of Lemma~\ref{lem:V}, we get that the operator
$L_{\Omega,\Bb}^\gamma$ has compact resolvent too, thereby proving
Corollary~\ref{thm-HM-*}.

\section*{Acknowledgements} AK is supported by a grant from Lebanese
University.

\end{document}